\def\section{\@startsection{section}{1}%
  \z@{1.1\linespacing\@plus\linespacing}{.8\linespacing}%
  {\normalfont\Large\scshape\centering}}
\theoremstyle{plain}
\newtheorem*{conj*}{Root Groups Conjecture}
\newtheorem*{thm1.2}{(1.2) Theorem}
\newtheorem*{thm1.3}{(1.3) Theorem}
\newtheorem*{thm1.4}{(1.4) Theorem}
\newtheorem*{prop*}{Proposition}
\newtheorem{prop}{Proposition}[section]
\newtheorem{thm}[prop]{Theorem}
\newtheorem{cor}[prop]{Corollary}
\newtheorem{lemma}[prop]{Lemma}
\theoremstyle{definition}
\newtheorem*{Def*}{Definition}
\newtheorem{notation}[prop]{Notation}
\newtheorem*{notation*}{Notation}
\newtheorem{remarks}[prop]{Remarks}
\newcommand{\mouf}{\mathbb{M}}
\newcommand{\cala}{\mathcal{A}}
\newcommand{\calb}{\mathcal{B}}
\newcommand{\cald}{\mathcal{D}}
\newcommand{\calm}{\mathcal{M}}
\newcommand{\gt}{\tau}
\newcommand{\nsg}{\trianglelefteq}
\newcommand{\sminus}{\smallsetminus}
\newcommand{\lan}{\langle}
\newcommand{\ran}{\rangle}
\newcommand{\Aut}{{\rm Aut}}
\newcommand{\Inv}{{\rm Inv}}
\newcommand{\widebar}[1]{\overset{\mskip1mu\hrulefill\mskip1mu}{#1}
                \vphantom{#1}}
\numberwithin{equation}{section}
\begin{document}
\title[]{Almost regular involutory automorphisms of uniquely $2$-divisible groups}
\author[Yoav Segev]{Yoav Segev}
\address{
         Department of Mathematics \\
         Ben-Gurion University \\
         Beer-Sheva 84105 \\
         Israel}
\email{yoavs@math.bgu.ac.il}

\keywords{almost regular involutory automorphism, uniquely $2$-divisible group.}
\subjclass[2000]{Primary: 20E36}

\begin{abstract}
We prove that a uniquely $2$-divisible group that admits an almost
regular involutory automorphism is solvable.
\end{abstract}

\date{\today}
\maketitle
\section{Introduction}
Recall that an automorphism $\nu$ of a group $H$ is
called {\it involutory} if $\nu\ne id$ and $\nu^2=id$.
The automorphism $\nu$ is called {\it almost regular}, if $C_H(\nu)$ is finite.
Recall that a group $U$ is {\it uniquely $2$-divisible}
if for each $u\in U$ there exists a unique $v\in U$ such that $v^2=u$.
Note that in particular a uniquely $2$-divisible group contains no involutions
(i.e.~elements of order $2$).

The purpose of this note is to use the techniques
introduced in the impressive paper \cite{Sh} of Shunkov,
where he proves that a periodic group that admits an almost regular
involutory automorphism is virtually solvable (i.e.~it has
a solvable subgroup of finite index).  We prove.
%
%
\begin{thm}\label{thm main}
Let $U$ be a uniquely $2$-divisible group.  If $U$
admits an involutory almost regular automorphism,
then $U$ is solvable.
\end{thm}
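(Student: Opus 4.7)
The plan is to adapt Shunkov's strategy from \cite{Sh}, with unique $2$-divisibility playing the role that periodicity plays in his argument. The elementary groundwork is the set of $\nu$-inverted elements $I := \{u \in U : \nu(u) = u^{-1}\}$ together with the map $\psi\colon U \to I$ defined by $\psi(u) := u\nu(u)^{-1}$. One checks that $\psi(u)$ does land in $I$ and that two elements of $U$ have the same image iff their ratio lies in $C_U(\nu)$, so the fibres of $\psi$ are (left) cosets of $C_U(\nu)$ and are finite. Surjectivity of $\psi$ is where unique $2$-divisibility enters: given $j \in I$, let $u$ be the unique square root of $j$; since $\nu(u)^2 = \nu(j) = j^{-1} = (u^{-1})^2$, uniqueness forces $\nu(u) = u^{-1}$, hence $u \in I$ and $\psi(u) = j$. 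Consequently $|I| = |U:C_U(\nu)|$, so $I$ is ``co-finite'' in $U$ and controls the whole group.

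The main structural step is to exploit the dihedral-like subgroups $D_{i,j} := \langle i, j\rangle$ for $i, j \in I$. On each such $D_{i,j}$, $\nu$ inverts both generators and so acts as an involutory automorphism whose centralizer is contained in $C_U(\nu)$, hence is finite. Moreover unique $2$-divisibility propagates inside $I$: the iterated square roots $i^{1/2^n}$ again lie in $I$ by the argument above, giving a dense supply of commuting pairs and powers with which to work. Following Shunkov's inductive scheme, one uses these local almost-regular dihedral configurations together with pigeonholing against the fixed finite group $C_U(\nu)$ to produce commutation relations among inverted elements, and from these to build a descending chain of $\nu$-invariant normal subgroups whose successive quotients are abelian, yielding a solvable series for $U$.

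The main obstacle is bridging the gap between the local solvability furnished by these dihedral subgroups and the global solvability of $U$. In Shunkov's periodic setting one has recourse to deep finite group theory applied to finite sections; here, with no periodicity, one must rely instead on the rigidity provided by uniqueness of square roots, which forces every $\nu$-invariant subgroup to be closed under the $2$-divisible structure and eliminates stray $2$-torsion in sections. The delicate point is to execute Shunkov's descent while keeping all the subgroups one constructs simultaneously $\nu$-invariant and closed under square roots; the payoff is that the conclusion improves from virtual solvability (as in \cite{Sh}) to outright solvability, since unique $2$-divisibility of $U$ is incompatible with a nontrivial finite solvable-free quotient.
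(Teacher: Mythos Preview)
Your proposal is not a proof but a programmatic sketch, and the parts that are concrete do not carry the weight you place on them. The first paragraph is fine and recovers standard facts (your $I$ is the paper's $S$, and your $\psi$ is essentially $u\mapsto \nu\nu^{u}$); note however that $|I|=|U:C_U(\nu)|$ does \emph{not} make $I$ ``co-finite'' as a subset of $U$ in any useful sense---what one actually needs, and what the paper quotes from Kov\'acs, is the decomposition $U=\langle I\rangle\,C_U(\nu)$ with $\langle I\rangle\nsg U$. The second paragraph is where the real gap lies. Observing that $\nu$ acts almost regularly on each $\langle i,j\rangle$ for $i,j\in I$ is simply the original hypothesis restricted to a subgroup; it does not by itself produce any commutation relation, and the ``iterated square roots $i^{1/2^n}$'' give you more elements of $I$ but no new leverage. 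The phrase ``following Shunkov's inductive scheme \ldots\ to build a descending chain of $\nu$-invariant normal subgroups whose successive quotients are abelian'' is the statement of the theorem, not a step toward it.

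What is missing is the central engine of the argument. The paper fixes an \emph{infinite} maximal abelian subgroup $A\subseteq I$ (already its existence requires work, Lemma~\ref{lem exofA}) and proves, via a Neumann covering argument, that for every $u\in U$ the conjugate $u\nu u^{-1}$ still inverts a subgroup $A_u\le A$ of \emph{finite index} (Proposition~\ref{prop A:Au}). From this one deduces that $C_U(D)/D$ is finite of odd order for every finite-index uniquely $2$-divisible $D\le A$, hence every finitely generated $H\le\langle I\rangle$ has $H/Z(H)$ finite, hence $\langle I\rangle'$ is \emph{periodic}. At that point Shunkov's theorem is applied as a black box to the periodic group $\langle I\rangle'$, and Feit--Thompson handles the finite odd-order pieces. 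Your sketch contains neither the infinite abelian $A$, nor the finite-index pigeonhole via Neumann's lemma, nor the reduction to a periodic derived subgroup, nor any appeal to Feit--Thompson; without at least the first three, there is no passage from local almost-regularity to global solvability.
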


\noindent
Our main motivation for dealing with automorphisms of uniquely $2$-divisible
groups comes from questions about the root groups of special Moufang sets,
and those tend to be uniquely $2$-divisible, see, e.g., \cite{S}.
Indeed, using Theorem \ref{thm main} it immediately follows that

%
\begin{cor}
Let $\mouf(U,\gt)$ be a special Moufang set.  If the Hua
subgroup contains an involution $\nu$ such that $C_U(\nu)$
is finite, then $U$ is abelian.
\end{cor}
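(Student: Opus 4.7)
The plan is to derive the corollary as a direct consequence of Theorem~\ref{thm main} together with two known structural facts about special Moufang sets that are recorded in the reference~\cite{S}.

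First, I would recall why the hypotheses of Theorem~\ref{thm main} are satisfied. For a special Moufang set $\mouf(U,\gt)$, the root group $U$ is uniquely $2$-divisible (this is one of the basic properties of special Moufang sets established in \cite{S}). The Hua subgroup $H$ acts on $U$ by conjugation as a group of automorphisms. Hence an involution $\nu\in H$ induces an involutory automorphism of $U$ (provided, of course, that its action on $U$ is nontrivial; since $U$ contains no involutions, $\nu$ cannot lie in $U$, and the faithfulness of the Hua action on $U$ then rules out $\nu$ acting trivially). By hypothesis $C_U(\nu)$ is finite, so this automorphism is almost regular in the sense of the paper.

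Second, I would invoke Theorem~\ref{thm main} to conclude that $U$ is solvable. At this point the corollary is reduced to the purely Moufang-theoretic statement: in a special Moufang set, a solvable root group is abelian. This last step is exactly where the structure theory of special Moufang sets enters, and it is available in \cite{S}: for special Moufang sets, solvability of $U$ forces $U$ to be abelian (the standard route is via nilpotence, using that $\nu$ induces the inverting map $u\mapsto u^{-1}$ combined with the centralizer condition on $\gt$).

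The main obstacle in writing this out rigorously is not the logical skeleton, which is short, but verifying the last implication (``solvable root group of a special Moufang set is abelian''). That implication uses the specific interaction between $\gt$ and $U$ in the special case and is not contained in Theorem~\ref{thm main} itself; everything else is an essentially formal transfer of hypotheses. So the corollary should be stated as an immediate consequence of Theorem~\ref{thm main} combined with the cited results from \cite{S}, with the brief verification above.
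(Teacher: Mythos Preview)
Your overall strategy matches the paper's in the main case, but there is a gap at the very first step. You assert that in a special Moufang set the root group $U$ is automatically uniquely $2$-divisible; this is not true without further argument. Unique $2$-divisibility follows only once one knows that $U$ contains no involutions (this is \cite[Proposition~4.6]{DS}, not a result from \cite{S}). The paper accordingly splits into two cases: if $U$ contains involutions, then $U$ is abelian directly by \cite[Theorem~5.5]{DST}; only in the remaining case does one get unique $2$-divisibility and invoke Theorem~\ref{thm main}. Your parenthetical ``since $U$ contains no involutions'' presupposes exactly the case distinction you have not made, so the argument as written does not cover all special Moufang sets.

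A second, smaller issue concerns the final implication. The passage from ``$U$ solvable'' to ``$U$ abelian'' for special Moufang sets is the main theorem of \cite{SW}, not something in \cite{S}. Your informal justification (``$\nu$ induces the inverting map $u\mapsto u^{-1}$'') conflates the almost-regular involution $\nu$ from the hypothesis with the structural map $\gt$ of the Moufang set; these are different objects, and nothing in the corollary's hypotheses forces $\nu$ to invert $U$. The correct route is simply to cite \cite{SW} once solvability is in hand.
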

\begin{proof}
If $U$ contains involutions, then $U$ is abelian by \cite[Theorem 5.5, p.~782]{DST}.
If $U$ does not contain involutions, then by \cite[Proposition 4.6, p.~5840]{DS},
$U$ is uniquely $2$-divisible, and then by Theorem \ref{thm main} and by the main theorem of \cite{SW},
$U$ is abelian.
\end{proof}

The proof of Theorem \ref{thm main} is obtained as follows.
First note that if $U$ is finite, then $U$ has odd order,
so by the Feit-Thompson theorem $U$ is solvable.
Hence we may assume that $U$ is infinite.

We let $A$ be a maximal abelian subgroup of $U$ (with respect to inclusion) inverted by $\nu$,
(i.e.~each element of $A$ is inverted by $\nu$).  In Lemma \ref{lem exofA}(2)
we show that we can take $A$ to be infinite.  We then show that
for elements $u_1,\dots, u_n\in U$, the involutions
$u_1\nu u_1^{-1},\dots, u_n\nu u_n^{-1}$ in the semi-direct product
$U\rtimes \lan \nu\ran$ invert a subgroup $D\le A$ with $|A:D|<\infty$
(Proposition \ref{prop A:Au}).  The next step is to show that
$C_U(D)/D$ is finite and solvable (Lemma \ref{lem CUD}).
Since $K:=\lan \nu u_1\nu u_1^{-1},\dots, \nu u_n\nu u_n^{-1}\ran\le C_U(D)$,
the subgroup $K$ is solvable and $K/Z(K)$ is finite.

Next let $S:=\{x\in U\mid x^{\nu}=x^{-1}\}$.
It is easy to see that an element $y\in U$ is in $S$ iff $y=\nu u\nu u^{-1}$,
for some $u\in U$, so by the above each finitely generated subgroup $H$
of $R:=\lan S\ran$ is solvable and satisfies: $H/Z(H)$ is finite.
Hence $R'$ is periodic (Proposition \ref{prop <S>}).
Using the above mentioned result of Shunkov, we see that $R'$
is solvable, so $R$ is solvable.

As is well known (see \cite{K})
$U=R C_U(\nu)$ and $R\nsg U$.  Since $C_U(\nu)$ is finite and uniquely $2$-divisible
it has odd order.  By the Feit-Thomson theorem, $C_U(\nu)$
is solvable and this at last shows that $U$
is solvable.

We remark that it is possible that with the aid of the Theorem
on page 286 of \cite{HM},
one can get even more delicate information on $U$, but we do not need
that, so we do not pursue this avenue further. 

\section{Notation and preliminary results}\label{sec not}

\begin{notation}\label{not A}
\begin{enumerate}
\item
Throughout this note $U$ is an infinite uniquely $2$-divisible group and
$\nu\in\Aut(U)$ is an involutory automorphism  which is almost regular.

\item
We denote by $G$ the semi-direct product of $U$ by $\nu$
and we indentify $U$ and $\nu$ with their images in $G$.
We let $\Inv(G)$ denote the set of involutions of $G$.

\item
We let $S:=\{x\in U\mid x^{\nu}=x^{-1}\}$.

\item
The  letter $A$ always denotes a fixed infinite maximal
(with respect to inclusion) abelian subgroup of $U$ which is
inverted by $\nu$ (i.e.~all of whose elements are
inverted by  $\nu$).  The existence of $A$ is guaranteed by
Lemma \ref{lem exofA}(2) and by Zorn's lemma.

\item
For each $u\in U$ we denote by $A_u$ the subgroup of $A$
inverted by $u\nu u^{-1}$.
\end{enumerate}
\end{notation}

\begin{remarks}\label{rem basic}
\begin{enumerate}
\item
Notice that $A$ is uniquely $2$-divisible.

\item
Note also that for any $u\in U$, the subgroup $A_u$ is
uniquely $2$-divisible.

\item
It is easy to check that $S=\{\nu\nu^x\mid x\in U\}$.
\end{enumerate}
\end{remarks}

\begin{lemma}[\cite{N}, Lemma 4.1, p.~239]\label{lem neumann}
Let the group $H$ be the union of finitely many, let us say $n$,
cosets of subgroups $C_1, C_2,\dots,  C_n:$
\[
H=\textstyle{\bigcup}_{i=1}^n C_ig_i,
\]
Then the index of (at least) one of these subgroups in $H$ does not exceed $n$.
\end{lemma}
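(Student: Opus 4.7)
The proof goes by induction on $n$. The base case $n = 1$ is immediate, since $H = C_1 g_1$ forces $g_1 \in C_1$ and hence $C_1 = H$, so $[H : C_1] = 1$.

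For the induction step with $n \ge 2$, I may assume the covering is irredundant (otherwise drop a redundant coset and apply the induction hypothesis with $n - 1$ cosets). Group the indices by underlying subgroup: let $C$ be one of the distinct subgroups appearing in the cover, occurring exactly $k$ times, so $1 \le k \le n$. If $k = n$, then all $C_i$ equal $C$, and $H$ is a union of $n$ cosets of $C$, whence $[H:C] \le n$. If $k < n$, then either $[H:C] \le n$ (and we are done) or $[H:C] > n \ge k$. In the latter case there exists a coset $Ch$ of $C$ distinct from each $Cg_i$ with $C_i = C$; being disjoint from all the $C$-cosets appearing in the cover, this coset satisfies $Ch \subseteq \bigcup_{C_i \ne C} C_i g_i$.

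Intersecting each $C_i g_i$ (with $C_i \ne C$) with $Ch$ yields either the empty set or a single coset of $C \cap C_i$ contained in $Ch$; right-multiplying by $h^{-1}$ transforms these into a covering of the group $C$ itself by at most $n - k < n$ cosets of the subgroups $C \cap C_i$. Applying the induction hypothesis to this covering of $C$, I obtain an index $j$ with $[C : C \cap C_j] \le n - k$.

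The main technical obstacle will be to convert the bound $[C : C \cap C_j] \le n - k$ inside $C$ into the desired bound $[H : C_j] \le n$ inside $H$. I would address this by iterating the construction: once $C \cap C_j$ is known to have finite index at most $n - k$ in $C$, one repeats the reduction using $C \cap C_j$ in place of $C$, tracks how the relative indices multiply at each stage, and argues that the descent must terminate in a subgroup of $H$ whose index is bounded by $n$. An equivalent approach is to observe that the very same inductive machinery shows that cosets of infinite-index subgroups can always be discarded from a finite cover; once all remaining subgroups have finite index, passing to the finite quotient $H / \bigcap_i \mathrm{core}_H(C_i)$ reduces the claim to a finite group, where the pigeonhole inequality $\sum_i 1/[H : C_i] \ge 1$ immediately forces some $[H : C_i] \le n$.
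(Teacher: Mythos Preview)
The paper does not supply a proof of this lemma; it is quoted verbatim from Neumann's 1954 paper \cite{N} and used as a black box, so there is no ``paper's proof'' to compare against.

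Your outline follows Neumann's own strategy and is essentially correct. You are right that the passage from $[C:C\cap C_j]\le n-k$ to $[H:C_j]\le n$ is the genuine difficulty, and your first proposed fix (``iterate and track how the relative indices multiply'') does not work as written: the product of the successive relative indices can exceed $n$ badly, so no bound of the form $[H:C_j]\le n$ drops out of that descent. Your second fix is the standard and correct completion. One first proves (by induction on the number of \emph{distinct} subgroups among the $C_i$, using precisely the translation trick $Ch\subseteq\bigcup_{C_i\ne C}C_ig_i$ that you already set up) that the cosets of infinite-index subgroups may be deleted from the cover without destroying it. With only finite-index $C_i$ remaining, the quotient of $H$ by $\bigcap_i\bigcap_{h\in H}h^{-1}C_ih$ is finite, and the counting inequality $\sum_i[H:C_i]^{-1}\ge 1$ forces $\min_i[H:C_i]\le n$. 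So your sketch is sound, with the caveat that ``infinite-index cosets can be discarded'' is itself a nontrivial lemma (also due to Neumann) and deserves its own short inductive proof rather than a one-line appeal to ``the same machinery.''
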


\begin{cor}\label{cor neumann}
Let the group $H$ be the union of finitely many, let us say $n$,
subsets $S_1, S_2,\dots,  S_n:$
\[
H=\textstyle{\bigcup}_{i=1}^n S_i.
\]
For each $i$ set $C_i:=\lan ab^{-1}\mid a, b\in S_i\ran$.  Then
the index of (at least) one of the  subgroups $C_1,\dots, C_n$ in $H$ does not exceed $n$.
\end{cor}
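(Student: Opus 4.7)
The plan is to reduce Corollary \ref{cor neumann} directly to Lemma \ref{lem neumann} by exhibiting each $S_i$ as a subset of a single right coset of $C_i$ in $H$.

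First I would dispose of a trivial bookkeeping issue: if some of the $S_i$ are empty, simply discard them. This only decreases the number of sets in the union from $n$ to some $n'\le n$, so any bound of the form ``index $\le n'$'' will still give ``index $\le n$.'' Hence we may assume every $S_i$ is nonempty, and for each $i$ we fix a choice $g_i\in S_i$.

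The key observation is then the following: for any $a\in S_i$, we have $ag_i^{-1}\in C_i$ by the very definition $C_i=\lan ab^{-1}\mid a,b\in S_i\ran$ (with $b=g_i$). Therefore $a=(ag_i^{-1})g_i\in C_ig_i$, which shows $S_i\subseteq C_ig_i$. Combining over $i$ gives
\[
H=\bigcup_{i=1}^{n'} S_i\subseteq \bigcup_{i=1}^{n'} C_ig_i\subseteq H,
\]
so $H$ is a union of $n'$ cosets of subgroups $C_1,\dots,C_{n'}$. Applying Lemma \ref{lem neumann} directly yields that at least one $C_i$ has index in $H$ at most $n'\le n$, which is exactly the assertion.

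There is essentially no real obstacle here; the content of the corollary is just the observation that fixing a basepoint in each $S_i$ turns the covering of $H$ by arbitrary subsets into a covering of $H$ by cosets, at which point Neumann's lemma applies verbatim. The only mild subtlety worth flagging in writing is the empty-set reduction above, since the corollary as stated places no hypothesis of nonemptiness on the $S_i$.
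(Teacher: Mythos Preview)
Your argument is correct and essentially identical to the paper's: the paper also fixes $g_i\in S_i$, notes $S_i\subseteq C_ig_i$, and applies Lemma~\ref{lem neumann}. The only difference is your explicit handling of empty $S_i$, which the paper tacitly ignores.
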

\begin{proof}
For each $i=1,\dots, n$, pick an arbitrary $g_i\in S_i$.  Notice that $S_i\subseteq C_ig_i$ for
all $i$, so $H=\bigcup_{i=1}^nC_ig_i$ and the Corollary follows from Lemma \ref{lem neumann}.
\end{proof}

\begin{lemma}
\begin{enumerate}
\item
All involutions in $G$ are conjugate;

\item
$S=\{\nu \gt\mid \gt\in\Inv(G)\}$.
\end{enumerate}
\end{lemma}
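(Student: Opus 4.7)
\medskip

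\noindent\textbf{Proof plan.} The plan for (1) is to show that every involution of $G$ is already $U$-conjugate to $\nu$, with the explicit conjugator produced via unique $2$-divisibility. Because $U$ has no involutions, any $\tau\in\Inv(G)$ lies in the coset $U\nu$, so $\tau = s\nu$ for some $s\in U$. Unwinding $\tau^2=1$ in the semidirect product, using $\nu^2=1$ and the fact that $\nu s = s^\nu\nu$, collapses to $s\cdot s^\nu = 1$, i.e.\ $s\in S$. Thus the involutions of $G$ are precisely the elements $s\nu$ with $s\in S$.

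The heart of the argument is constructing the conjugator. Given $s\in S$, I would let $v\in U$ be the unique square root of $s^{-1}$ provided by unique $2$-divisibility. Applying $\nu$ and using $s^\nu=s^{-1}$ shows that $(v^\nu)^2 = s = (v^{-1})^2$, so uniqueness of square roots forces $v^\nu = v^{-1}$. A one-line computation inside $G$ then gives $v^{-1}\nu v = v^{-1}v^\nu\nu = v^{-2}\nu = s\nu = \tau$, proving (1).

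Statement (2) should then follow immediately by combining (1) with Remark \ref{rem basic}(3), which already identifies $S$ with $\{\nu\nu^x\mid x\in U\}$. Indeed, (1) implies $\Inv(G)$ is the single $U$-conjugacy class $\{\nu^x\mid x\in U\}$, so the two descriptions $\{\nu\tau\mid \tau\in\Inv(G)\}$ and $\{\nu\nu^x\mid x\in U\}$ coincide, yielding (2).

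The only nontrivial ingredient is the conjugator trick in (1): extracting the relation $v^\nu=v^{-1}$ from $s\in S$ by squaring and appealing to uniqueness of square roots. Everything else is a routine manipulation inside $U\rtimes\langle\nu\rangle$.
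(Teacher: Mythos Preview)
Your proposal is correct and follows essentially the same approach as the paper: write an arbitrary involution as $s\nu$ with $s\in S$, extract a square root in $U$ to produce the conjugating element (using uniqueness of square roots to verify it lies in $S$), and then invoke Remark~\ref{rem basic}(3) for part~(2). The only cosmetic difference is that the paper takes $y$ with $y^2=s$ and writes $\tau=y\nu y^{-1}$, whereas you take $v$ with $v^2=s^{-1}$ and write $\tau=v^{-1}\nu v$; these are the same computation with $v=y^{-1}$.
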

\begin{proof}
Let $\gt\in\Inv(G)$.  Then $\gt =x\nu,$ for some $x\in U$.
Since $\gt$ is an involution $x\in S$.  Let $y\in U$ be the unique
element with $y^2=x$, then $y\in S$ and $\gt=x\nu =y^2\nu=y\nu y^{-1}$.
This shows (1).  Part (2) is Remark \ref{rem basic}(3).
\end{proof}

\begin{lemma}\label{lem basic}
Let $D$ be an abelian uniquely $2$-divisible subgroup of $U$.  Then
\begin{enumerate}
\item
$C_U(D)/D$ is a uniquely $2$-divisible group.

\item
If $D$ is inverted by $\nu,$
then $\nu D$ is an almost regular
involutory automorphism of $C_U(D)/D$.

\item
Assume that $D$ is inverted by $\nu$ and let
$E/D$ be a subgroup of $C_U(D)/D$
which is inverted by $\nu D$.  Then
$E$ is inverted by $\nu$, so, in particular, $E$ is
abelian.
\end{enumerate}
\end{lemma}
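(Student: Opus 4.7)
The plan is to handle the three parts using one recurring trick: in a uniquely $2$-divisible group $U$, if $a^2$ commutes with $b$ then so does $a$, because both $a$ and $b^{-1} a b$ are square roots of $a^2$ and hence equal by uniqueness. This little observation underlies everything.

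For (1), I would set $C := C_U(D)$. Given $xD \in C/D$, let $y \in U$ be the unique square root of $x$. Since $y^2 = x$ centralizes each $d \in D$, the trick above forces $y$ to centralize $d$, so $y \in C$; this produces a square root of $xD$ in $C/D$. For uniqueness, if $y_1^2 D = y_2^2 D$ then $y_1^2 = y_2^2 d$ for some $d \in D$; taking $e \in D$ with $e^2 = d$ (using unique $2$-divisibility of $D$) and noting $y_2$ centralizes $e$, we get $(y_2 e)^2 = y_1^2$, so $y_1 = y_2 e$ by uniqueness in $U$, hence $y_1 D = y_2 D$.

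For (2), first note that $\nu$ normalizes $D$ (it inverts it) and therefore $C_U(D)$, so it induces an automorphism $\bar\nu$ of $C/D$ with $\bar\nu^2 = 1$. I would bound the preimage $K$ of $C_{C/D}(\bar\nu)$ in $C$ by exhibiting a homomorphism $\phi \colon K \to D$ sending $c \mapsto c^{-1} c^{\nu}$. The image lies in $D$ by the defining property of $K$; that $\phi$ is a homomorphism uses that elements of $C$ centralize $D$; its kernel is $C \cap C_U(\nu)$, which is finite. Restricted to $D$, $\phi$ is the map $d \mapsto d^{-2}$, a bijection on $D$ by unique $2$-divisibility, so $\phi$ surjects onto $D$ and consequently $K = D \cdot (C \cap C_U(\nu))$. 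This yields $|K/D| \le |C_U(\nu)| < \infty$.

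For (3), take $e \in E$ and write $e^{\nu} = e^{-1} d$ for some $d \in D$. Applying $\nu$ once more and using that $\nu$ inverts $D$ yields $e = d^{-1} e d^{-1}$; since $e \in C$ centralizes $d$, this simplifies to $d^2 = 1$, forcing $d = 1$ as $U$ contains no involutions. Thus each element of $E$ is inverted by $\nu$, and comparing $(e_1 e_2)^{\nu} = e_1^{-1} e_2^{-1}$ with $(e_1 e_2)^{-1} = e_2^{-1} e_1^{-1}$ shows $E$ is abelian. The main obstacle I expect is the existence step of (1), where the square root $y$ must be coaxed into $C_U(D)$ rather than merely lying in $U$; this is precisely what the uniqueness-of-square-roots trick is designed to handle. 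Once it is in place, (2) becomes a matter of setting up and analyzing $\phi$, and (3) is a short direct computation.
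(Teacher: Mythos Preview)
Your proof is correct and follows essentially the same approach as the paper's: part~(1) matches (you even supply the reason, via uniqueness of square roots, that $y\in C_U(D)$, which the paper leaves implicit), part~(3) is identical, and your homomorphism $\phi(c)=c^{-1}c^{\nu}$ in part~(2) is just a repackaging of the paper's direct computation---the paper takes $a$ with $\nu^{a}=\nu d$, picks $x\in D$ with $x^{2}=d$ so that $\nu^{x}=\nu^{a}$, and concludes $ax^{-1}\in C_U(\nu)$, which is exactly your surjectivity-of-$\phi|_D$ step unwound. Both arguments yield $C_{C/D}(\nu D)=C_C(\nu)D/D$.
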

 \begin{proof}
 (1):\quad
 Set $C:=C_U(D)$.  Assume that $a, b\in C$ and $a^2D=b^2D$.
 Let $x, y\in D$ with $a^2x=b^2y$ and let
 $u, v\in D$ with $u^2=x$ and $v^2=y$.
 Then $a^2u^2=b^2v^2$ and since $a, b$
 commute with $u, v$ we see that $(au)^2=(bv)^2$,
 hence $au=bv$ so $aD=bD$.

 Furthermore let $aD\in C/D$.  Let $b\in U$ with $b^2=a$.
 Then $b\in C$ and $bD$ is the square root of $aD$ in $C/D$.

 \medskip

 \noindent
 (2):\quad
Clearly $\nu D$ is an involutory automorphism of  $C/D$
(acting via conjugation).  Assume that $aD\in C/D$
 centralizes $\nu D$.  Then $\nu^a =\nu d$, for some $d\in D$.
 Let $x\in D$ with $x^2=d$.  Then $\nu$ inverts $x$ and we see
 that $\nu^a=\nu^x$ and $ax^{-1}\in C_U(\nu)$.  It follows that
 $C_{C/D}(\nu D)=C_C(\nu)D/D,$ and since $\nu$ is almost regular,
 so is $\nu D$.
 \medskip

 \noindent
 (3):\quad
 Let $xD\in C/D$ be an element inverted by $\nu D$.  Then
 $x^{\nu}=x^{-1}d$, for some $d\in D$, and conjugating by $\nu$
 we see that $x=x^{-\nu}d^{-1}$ which implies that $x^{\nu}=x^{-1}d^{-1}$.
 Thus $d=d^{-1}$ so $d=1$.

 Now let $e\in E$.  Then, by hypothesis, $eD$ is inverted by $\nu D$,
 so $e^{\nu}=e^{-1}$.
 \end{proof}

\section{The proof of  Theorem \ref{thm main}}
%
%
%

\begin{lemma}\label{lem exofA}
Let $D$ be an abelian subgroup of $U$ (we allow $D=1$) such
that $D$ is inverted by $\nu$ and such that $C_U(D)$ is infinite.
Assume that
\[
(S\cap C_U(D))\sminus D\ne\emptyset.
\]
Then
\begin{enumerate}
\item
there exists an element $w\in C_U(D)\sminus D$ which is inverted
by $\nu$ and such that $C_U(\lan D, w\ran)$ is infinite;

\item
There exists an infinite abelian subgroup of $U$ which is inverted by $\nu$.
\end{enumerate}
\end{lemma}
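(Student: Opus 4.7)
I would prove (1) first and derive (2) from it by iteration.

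For (1), split into cases according to whether $D$ is infinite or finite. If $|D|=\infty$, the conclusion is immediate: any $w$ in the non-empty set $(S\cap C_U(D))\sminus D$ works, because $w\in C_U(D)$ commutes with the abelian $D$ and is inverted by $\nu$, so $\lan D,w\ran$ is abelian and inverted by $\nu$, and $C_U(\lan D,w\ran)\supseteq D$ is infinite.

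If $|D|<\infty$, set $C:=C_U(D)$, which is infinite by hypothesis, and consider the map $\phi\colon C\to S\cap C$ given by $\phi(g)=g^{-1}g^\nu$. A direct computation shows the image lies in $S\cap C$ and that the fibers of $\phi$ are the cosets of the finite subgroup $C_C(\nu):=C\cap C_U(\nu)$; in particular $|\phi^{-1}(D)|\le|D|\cdot|C_C(\nu)|<\infty$, which forces $(S\cap C)\sminus D$ to be infinite. The task is then to find $w$ in this infinite set with $C_C(w)=C_U(\lan D,w\ran)$ infinite. Assume for contradiction that every such $w$ has $C_C(w)$ finite. A further count with $\phi$ shows that each such $w$ has at most $|C_C(\nu)|$ many $C$-conjugates lying in $S$, so $(S\cap C)\sminus D$ splits into infinitely many $C$-conjugacy classes, each element of which has infinite $C$-orbit but finite $C$-centralizer. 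The main technical obstacle is to translate this structure, together with Corollary~\ref{cor neumann}, into a finite covering of $C$ (or of a cofinite subset of it) by cosets of these finite centralizers, and thereby derive a contradiction with $|C|=\infty$.

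For (2), I iterate (1). Start with $D_0=1$: $C_U(D_0)=U$ is infinite, and the $\phi$-count above (applied with $C=U$) gives $|S|=\infty$, so in particular $S\sminus\{1\}\ne\emptyset$. By (1) there is $w_1\in U\sminus\{1\}$ inverted by $\nu$ with $C_U(w_1)$ infinite; set $D_1:=\lan w_1\ran$. At each subsequent stage where $D_n$ is finite, the same $\phi$-count (now applied with $C=C_U(D_n)$, infinite by the previous application of (1)) shows that $(S\cap C_U(D_n))\sminus D_n$ is infinite, so (1) applies and yields $w_{n+1}\in C_U(D_n)\sminus D_n$ inverted by $\nu$ with $C_U(\lan D_n,w_{n+1}\ran)$ infinite; set $D_{n+1}:=\lan D_n,w_{n+1}\ran\supsetneq D_n$ (abelian since $w_{n+1}\in C_U(D_n)$, and inverted by $\nu$ by the same computation as in Case~1 above). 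The strictly ascending chain $D_0\subsetneq D_1\subsetneq\cdots$ either reaches some infinite $D_n$ directly, or else $\bigcup_n D_n$ is an infinite abelian subgroup of $U$ inverted by $\nu$.
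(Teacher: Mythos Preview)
Your reduction of part~(2) to iterated applications of~(1) is fine and matches the paper. The case $|D|=\infty$ in part~(1) is also correct and is a pleasant shortcut the paper does not bother to isolate.

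The gap is in part~(1) when $|D|<\infty$. You yourself flag it: you write that ``the main technical obstacle is to translate this structure, together with Corollary~\ref{cor neumann}, into a finite covering of $C$\dots'' and then stop. But no such finite covering is in sight. Neumann's lemma requires $C$ to be a union of \emph{finitely many} cosets (or subsets); your analysis produces \emph{infinitely many} $C$-conjugacy classes in $(S\cap C)\sminus D$, each meeting $S$ in at most $|C_C(\nu)|$ points. That is information about $S$, not a finite decomposition of $C$, and I do not see how to extract a contradiction from it along your lines. The counting is correct as far as it goes, but it does not close.

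The paper's argument is a direct pigeonhole rather than a contradiction. Fix one $b\in S\sminus D$, write $b=\nu\gt$ with $\gt\in\Inv(G)$, and let $u\in U$ satisfy $u^{-2}=b$, so that $\gt^u=\nu$. For each $a\in S$ let $\ell_a$ be the square root of $(\nu\gt^a)^{-1}$; then $\gt^{a\ell_a}=\nu=\gt^u$, so $h_a:=a\ell_a u^{-1}\in C_U(\gt)$. A short computation using $\ell_a,a\in S$ gives $(h_au)^{a\nu}=(h_au)^{-1}$. Now the $h_a$ range over the \emph{finite} set $C_U(\gt)$ while $a$ ranges over the infinite set $S$, so by pigeonhole some fixed $hu$ is inverted by infinitely many involutions $a\nu$; products of pairs of these lie in $C_U(hu)$, which is therefore infinite. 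One checks $hu\notin D$ (using $D\le Z(U)$ and unique $2$-divisibility), and finally conjugates so that $\nu$ itself inverts the element. This is the missing idea: instead of trying to cover $C$, one funnels the infinite set $S$ through the finite set $C_U(\gt)$.
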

\begin{proof}
(1):\quad
Set $V:=C_U(D)$.  Then $V$ is an infinite  uniquely $2$-divisible group, and
$\nu$ acts on $V$,
so without loss we may assume that $U=V$ and that $D\le Z(U)$.

Pick $b\in S\sminus D$ (note that $b$ exists by hypothesis),
and write $b=\nu\gt$ with $\gt\in\Inv(G)$.
Let
\[
u\in U \text{ with } u^{-2}=\nu \gt,
\]
and note that since $u$ is inverted by both $\nu$ and $\gt,$ we have
\[
\nu=\gt^u.
\]
We now find an element $h\in C_U(\gt)$ such that $hu$ is inverted by infinitely
many involutions of $G$.  Note that $hu\notin D$; indeed, if $h=1$, then $hu=u$
and since $b\notin D$ also $u\notin D$. Otherwise if $hu\in D$ and $h\ne 1,$ then
\[
u^{-1}h^{-1}=(hu)^{\gt}=h^{\gt}u^{\gt}=hu^{-1},
\]
and it follows that $u$ inverts $h$ which is not possible in
a uniquely $2$-divisible group.

Since all involutions
in $G$ are conjugate, conjugating $hu$ by an appropriate
element we may assume that $\nu$ inverts $hu$
and since $hu$ is inverted by infinitely many involutions
we see that $C_U(hu)$ is infinite and taking $w=hu$ we are done.

It remains to show the existence of $h$.  For each $a\in S$, let
\[
s_a:=\nu \gt^{a} \text{ and } \ell_a^{-2}=s_a.
\]
It is easy to check that since $\ell_a$ is inverted by $\nu$ and $\gt^a$, we have
$\gt^{a\ell_a}=\nu$.  Hence
\[
\gt^{a\ell_a}=\gt^u,\text{ and hence }h_a:=a\ell_au^{-1}\in C_U(\gt).
\]
It follows that $\ell_a=a^{-1}h_au$.  Since both $\ell_a$ and $a$ are inverted
by $\nu$ we get after conjugating by $\nu$ that $\ell_a^{-1}=a(h_a u)^{\nu}=(h_au)^{-1}a$.
Notice now that $a\nu\in\Inv(G)$
and it follows that
\[
(h_au)^{a\nu}=(h_au)^{-1}.
\]
By hypothesis the set $\{h_a\mid a\in S\}$ is finite since it is
contained in $C_U(\gt)$.  Further, the set $S$ is infinite.
This implies the existence of $h\in C_U(\gt)$ such
that the number of involutions $a\nu$ that invert $hu$ is infinite.
This proves (1).
\medskip

\noindent
(2):\quad
If $D$ is finite and $C_U(D)$ is infinite, then $(S\cap C_U(D))\sminus D\ne\emptyset$.
Hence part (2) follows from (1) by starting with $D=1$ and iterating the process as long as the subgroup
$\lan D, w\ran$ is finite.
\end{proof}

\begin{lemma}\label{lem finiteB}
Let $B$ be a finitely generated abelian subgroup of $U$ which is inverted by $\nu$.
Then $A$ contains a subgroup $A_1$ of finite index such that $\lan A_1, B\ran$
is abelian.
\end{lemma}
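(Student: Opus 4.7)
The plan is to reduce the claim to the cyclic case and then combine the maximality of $A$ with a Neumann-style covering argument.

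Let $b_1,\dots,b_n$ generate $B$; each $b_i\in S$, since $B$ is inverted by $\nu$. If we show $|A:C_A(b_i)|<\infty$ for every $i$, then $A_1:=\bigcap_{i=1}^n C_A(b_i)$ is a finite-index subgroup of $A$ (as a finite intersection of finite-index subgroups), and centralizes every generator of $B$, hence all of $B$; consequently $\langle A_1,B\rangle=A_1 B$ is abelian. So it suffices to prove: for each $b\in B\cap S$, $|A:C_A(b)|<\infty$.

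Fix such a $b$. If $b\in C_U(A)$, then $A\langle b\rangle$ is abelian; since all its elements commute, one checks $(ab^k)^\nu = a^{-1}b^{-k}=(ab^k)^{-1}$, so $A\langle b\rangle$ is an abelian subgroup inverted by $\nu$. Maximality of $A$ then forces $b\in A$, giving $C_A(b)=A$. We may therefore assume $b\notin C_U(A)$.

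Set $\tau:=b\nu$. Using $\nu b=b^{-1}\nu$ (which holds because $b\in S$), we get $\tau^2=b\nu\cdot b\nu = b\cdot b^{-1}\nu\cdot\nu = 1$, so $\tau\in\Inv(G)$. Since all involutions of $G$ are conjugate to $\nu$, $C_U(\tau)$ is a conjugate of $C_U(\nu)$ and is finite; in particular $C_A(\tau)$ is finite. A direct calculation identifies
\[
C_A(b) = \{a\in A : a^b = a\}, \qquad C_A(\tau) = \{a\in A : a^b = a^{-1}\},
\]
and these subgroups of $A$ intersect trivially, since any common element $a$ satisfies $a^2=1$ and hence $a=1$ by unique $2$-divisibility.

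The central remaining task, which I expect to be the main obstacle, is to deduce finiteness of the index $|A:C_A(b)|$ from the finiteness of $C_A(\tau)$. My intended route is Corollary \ref{cor neumann}: produce a finite cover $A = \bigcup_{j=1}^k T_j$ such that for at least one $j$ the subgroup $\langle aa'^{-1} : a,a'\in T_j\rangle$ lies in $C_A(b)$; the corollary would then give $|A:C_A(b)|\le k$. A natural candidate for the cover comes from the $A$-conjugation action on the involutions $\tau^a=(a^{-1}ba^{-1})\nu$ ($a\in A$), where $\tau^a=\tau^{a'}$ forces $aa'^{-1}\in C_A(\tau)$; however this orbit is infinite, so a subtler refinement is needed. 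I anticipate the correct construction will combine unique $2$-divisibility, the finiteness of $C_G(\tau)$, and the maximality of $A$ to pin at least one cell of the cover inside a single coset of $C_A(b)$.
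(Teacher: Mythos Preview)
Your reduction to the cyclic case is correct and matches the paper: once $|A:C_A(b)|<\infty$ for each generator $b$ of $B$, the intersection $A_1=\bigcap_i C_A(b_i)$ works. Note also that for $b\in S$ one has $C_A(b)=A_b$ (the subgroup of $A$ inverted by $b\nu b^{-1}$): since $\nu$ inverts both $a\in A$ and $b$, the identity $b\nu b^{-1}\cdot\nu=b^2$ shows that $b\nu b^{-1}$ inverts $a$ iff $b^2$ centralises $a$, iff $b$ centralises $a$ by unique $2$-divisibility. So your target is exactly the conclusion of Proposition~\ref{prop A:Au}, and the paper's proof of the lemma is nothing more than an appeal to that proposition.

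The genuine gap is that you have not proved $|A:C_A(b)|<\infty$, and the route you sketch does not get there. Knowing that $C_A(\tau)=\{a\in A: a^b=a^{-1}\}$ is finite does \emph{not} force $\{a\in A: a^b=a\}$ to have finite index: you have no automorphism of $A$ to analyse (there is no reason $b$ normalises $A$), and even when there is one, ``$-1$ is not an eigenvalue'' does not imply ``$1$ is an eigenvalue of full rank''. Your proposed cover by the $A$-orbit of $\tau$ is, as you observe, infinite, and there is no evident way to refine it into finitely many pieces whose difference sets land in $C_A(b)$.

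The covering that actually works is the one in the proof of Proposition~\ref{prop A:Au}: for each $a\in A$ set $s_a^{-2}=\nu\nu^{ab}$ and $v_a:=abs_a$; then $v_a\in C_U(\nu)$ by Lemma~\ref{lem xs}, so the map $a\mapsto v_a$ has \emph{finite} image, and the fibres $\calm_c=\{a\in A: v_a=v_c\}$ give a finite cover of $A$. A short manipulation (equations~\eqref{eq seq}--\eqref{eq unuu-1 inverts}) shows that $b\nu b^{-1}$ inverts $\lan ac^{-1}: a,c\in\calm_d\ran$ for every $d$, and Corollary~\ref{cor neumann} then yields $|A:A_b|<\infty$. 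The key idea you were missing is to index the cover not by involutions in the $A$-orbit of $\tau$ but by elements of the finite set $C_U(\nu)$, via the ``square-root correction'' $s_a$.
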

\begin{proof}
Let $\calb$ be a finite set of generators for $B$ and set
$A_1:=\bigcap_{b\in \calb}A_b$.  By Proposition \ref{prop A:Au} and since $\calb$
is finite $|A:A_1|<\infty$.  Further, for each $b\in \calb$, $\nu$ and $b\nu b^{-1}$ invert
$A_1$, so $b^2=b\nu b^{-1}\nu\in C_U(A_1)$ (recall that $\nu$ inverts $b$).
Since $U$ is uniquely $2$-divisible, $b\in C_U(A_1)$.  Hence $\calb\le C_U(A_1)$ and the lemma holds.
\end{proof}

\begin{lemma}\label{lem CUD}
Let $D$ be a uniquely $2$-divisible subgroup of $A$ of finite index.  Then
$C_U(D)/D$ is finite and solvable.
\end{lemma}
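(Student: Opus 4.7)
The plan is to show $V := C_U(D)$ satisfies $|V/D| < \infty$; since a finite uniquely $2$-divisible group has odd order, Feit-Thompson then yields solvability.

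First I would verify that $V$ itself is uniquely $2$-divisible (the unique $U$-square root of $v\in V$ lies in $V$ by applying the uniqueness clause in $U$ to each $d\in D$), so that Lemma \ref{lem basic}(1,2) make $V/D$ uniquely $2$-divisible with $\nu D$ acting as an almost regular automorphism (possibly trivial). Assume for contradiction that $V/D$ is infinite. The easy case is that $\nu D$ acts trivially on $V/D$: then each $s\in S\cap V$ satisfies $s^2\in D$, and since the unique $D$-square root $d$ of $s^2$ commutes with $s$ (as $s\in V=C_U(D)$) one gets $(sd^{-1})^2=1$, so $s=d\in D$; combined with the Kegel-type decomposition $V=\lan S\cap V\ran\cdot C_V(\nu)$ (cf.~\cite{K}), this forces $|V/D|\le |C_V(\nu)|<\infty$, a contradiction. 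So $\nu D$ is an almost regular involutory automorphism of $V/D$.

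The key preliminary observation is that $A/D$ is a \emph{maximal} abelian subgroup of $V/D$ inverted by $\nu D$: any $E/D\supseteq A/D$ of this type lifts via Lemma \ref{lem basic}(3) to an abelian subgroup $E\supseteq A$ of $U$ inverted by $\nu$, and the maximality of $A$ forces $E=A$.

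I would then transfer the ``$D=A$ argument'' (the argument showing $C_U(A)/A$ is finite, which combines Lemma \ref{lem exofA}(2), Lemma \ref{lem basic}(3), and maximality of $A$) to the group $V/D$ equipped with the maximal abelian inverted subgroup $A/D$. By Lemma \ref{lem basic}(1,2) the quotient $C_{V/D}(A/D)/(A/D)$ is uniquely $2$-divisible with an almost regular involutory or trivial induced automorphism. If it were infinite, then in the trivial case the squaring trick again applies, and in the involutory case Lemma \ref{lem exofA}(2) applied inside this quotient, followed by two successive applications of Lemma \ref{lem basic}(3) (first in $V/D$ using $A/D$, then in $U$ using $A$), would produce an abelian subgroup of $U$ inverted by $\nu$ strictly containing $A$ --- violating maximality of $A$. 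Hence $C_{V/D}(A/D)/(A/D)$ is finite, and since $A/D$ itself is finite, so is $C_{V/D}(A/D)$.

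It remains to bound $|V/D:C_{V/D}(A/D)|$; if finite, $V/D$ is finite, completing the contradiction. This reduces to showing $V/D$ normalizes $A/D$, giving an embedding $V/D/C_{V/D}(A/D)\hookrightarrow\Aut(A/D)$ into a finite group. I would establish normalization by exploiting Proposition \ref{prop A:Au}: for each $v\in V$, $z_v := vv^{-\nu}\in S\cap V$ and $A_v=C_A(z_v)$ has finite index in $A$ and contains $D$, so $v(A/D)v^{-1}$ is an abelian subgroup of $V/D$ of the same (finite) order as $A/D$, inverted by the conjugate involution $vD\cdot(\nu D)\cdot v^{-1}D$; combining the finite-index invariance coming from Proposition \ref{prop A:Au} with the maximality constraint on $A/D$ among abelian subgroups of $V/D$ inverted by $\nu D$, one should be able to conclude $v(A/D)v^{-1}=A/D$.

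The main obstacle is this final normalization step: bridging from the finite-index data of Proposition \ref{prop A:Au} to the setwise stability of $A/D$ under $V/D$-conjugation. This is the most delicate point, where maximality of $A$, Proposition \ref{prop A:Au}, and the Kegel decomposition of $V/D$ must be combined most carefully.
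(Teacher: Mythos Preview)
Your setup is correct, and your observation that $A/D$ is a \emph{maximal} abelian subgroup of $V/D$ inverted by $\nu D$ is exactly the right one. The gap is the final normalization step, which you yourself flag as the main obstacle. Your sketch there does not work: the conjugate $v(A/D)v^{-1}$ is inverted by $(vD)(\nu D)(vD)^{-1}$, not by $\nu D$, so the maximality of $A/D$ among abelian subgroups inverted by $\nu D$ gives no grip on it; and the finite-index information from Proposition~\ref{prop A:Au} only tells you that $z_v=vv^{-\nu}$ centralizes a finite-index subgroup of $A$, which says nothing new here since $A/D$ is already finite. There is in fact no reason to expect $A/D\nsg V/D$, and your argument does not supply one.

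The paper avoids this issue entirely by using Lemma~\ref{lem finiteB}, which you do not invoke. Assuming $V/D$ infinite, one first picks (via Lemma~\ref{lem exofA}(2) applied to $V/D$) an \emph{infinite} maximal abelian subgroup $\bar{\mathcal A}$ of $V/D$ inverted by $\nu D$. Since $A/D$ is finite, hence finitely generated, Lemma~\ref{lem finiteB} applied inside $V/D$ (with $\bar{\mathcal A}$ in the role of $A$ and $A/D$ in the role of $B$) produces a finite-index subgroup $\bar{\mathcal A}_1\le\bar{\mathcal A}$ such that $\lan\bar{\mathcal A}_1,\,A/D\ran$ is abelian. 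This group is infinite, inverted by $\nu D$, and contains $A/D$; by Lemma~\ref{lem basic}(3) its preimage in $U$ is abelian, inverted by $\nu$, and properly contains $A$, contradicting the maximality of $A$. Thus no normalization argument is needed, and your intermediate step bounding $C_{V/D}(A/D)$ becomes unnecessary.
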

\begin{proof}
Set $C:=C_U(D)$ and $\widebar C:=C/D$.  Assume that $\widebar C$ is infinite.  By Lemma \ref{lem basic}(1),
$\widebar C$ is uniquely $2$-divisible,
and by hypothesis $\widebar A:=A/D$ is a finite subgroup of $\widebar C$.

Let $\widebar \cala$ be an infinite maximal abelian subgroup
of $\widebar C$ inverted by $\nu D$.  The existence of $\widebar \cala$
is guaranteed by Lemma \ref{lem basic}(2) and  by Lemma \ref{lem exofA}(2) (with $\widebar C$ in place of $U$).
By Lemma \ref{lem finiteB} (with $\widebar C$ in place of $U$ and
$\widebar A$ in place of $B$), there exists an finite index $\widebar {\cala_1}\le \widebar \cala$
such that $\widebar {\cala_2}:=\lan\widebar {\cala_1}, \widebar A\ran$ is abelian.
Note that $\widebar {\cala_2}$ is inverted by $\nu D$, so by Lemma \ref{lem basic}(3),
the inverse image $\cala_2$ of $\widebar {\cala_2}$ in $C_U(D)$ is an abelian subgroup
inverted by $\nu$.  Clearly $\cala_2$ properly contains $A$.  This contradicts the maximality
of $A$ and shows that $\widebar C$ is finite.

Let $\cald\le C$ be a maximal central subgroup of $C$ which
is inverted by $\nu$.  Of course $\cald\ge D$.  Further, it
is clear that $\cald$ is a uniquely $2$-divisible group.
Suppose $t\cald$ is an involution in $C/\cald$.  Then $t^2\in\cald$,
so also $t\in\cald$ and we see that $C/\cald$ has odd order.
By the Feit-Thompson theorem, $C/\cald$ is solvable, and the proof
of the lemma is complete. 
\end{proof}

\begin{lemma}\label{lem xs}
Let $x\in U$ and let $s\in U$ be the unique element
such that $s^{-2}=\nu x^{-1}\nu x$.  Then $xs\in C_U(\nu)$.
\end{lemma}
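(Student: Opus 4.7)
The plan is to show first that $\nu$ inverts $s$, from which membership of $xs$ in $C_U(\nu)$ follows by a short computation. The key observation is that the defining relation $s^{-2}=\nu x^{-1}\nu x$ may be rewritten as
\[
s^{-2}=(x^{\nu})^{-1}x,\qquad\text{equivalently}\qquad s^{2}=x^{-1}x^{\nu},
\]
since $\nu$ acts by conjugation and $\nu^{2}=1$ gives $\nu x^{-1}\nu=(x^{-1})^{\nu}=(x^{\nu})^{-1}$.

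Next I would apply $\nu$ to this identity. Conjugation by $\nu$ sends $s^{2}$ to $(s^{\nu})^{2}$ and sends $x^{-1}x^{\nu}$ to $(x^{\nu})^{-1}x=s^{-2}$. Thus
\[
(s^{\nu})^{2}=s^{-2}=(s^{-1})^{2}.
\]
Since $U$ is uniquely $2$-divisible, square roots are unique, and therefore $s^{\nu}=s^{-1}$; in other words $s\in S$.

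Finally I would compute $(xs)^{\nu}$. Using $s^{\nu}=s^{-1}$ and the relation $s^{2}=x^{-1}x^{\nu}$ (so $x^{\nu}=xs^{2}$), we get
\[
(xs)^{\nu}=x^{\nu}s^{\nu}=(xs^{2})s^{-1}=xs,
\]
which is precisely the statement $xs\in C_U(\nu)$.

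There is no real obstacle: the entire argument rests on the single clever step of applying $\nu$ to the defining equation for $s$ and invoking uniqueness of square roots to conclude that $\nu$ inverts $s$. The rest is bookkeeping. One only has to be careful that inverses, the action of $\nu$, and multiplication in the possibly non-abelian group $U$ are handled in the correct order; in particular the reduction $\nu x^{-1}\nu x=(x^{\nu})^{-1}x$ uses only that $\nu^{2}=1$ as an element of $G=U\rtimes\langle\nu\rangle$, and no commutativity is assumed between $x$ and $s$.
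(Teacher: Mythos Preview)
Your proof is correct and follows essentially the same approach as the paper: both arguments hinge on first showing that $\nu$ inverts $s$ (via unique $2$-divisibility) and then performing a short computation. The only minor difference is that the paper also notes $\nu^{x}$ inverts $s$ and works in $G$ to conclude $\nu^{xs}=\nu$, whereas you compute $(xs)^{\nu}=xs$ directly in $U$ using only that $\nu$ inverts $s$; your route is if anything slightly more economical.
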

\begin{proof}
Notice that $s$ is inverted by $\nu$ and $\nu^x$.  Hence
\[
1=s^2\nu \nu^x=\nu s^{-2}\nu^x=\nu s^{-1}\nu^xs,
\]
so the lemma holds.
\end{proof}

\begin{prop}\label{prop A:Au}
Let $A$ be as in Notation \ref{not A}(4) and let $u\in U$.
Let $A_u$ be as in Notation \ref{not A}(5).  Then $|A:A_u|<\infty$.
\end{prop}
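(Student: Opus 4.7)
The plan is to exhibit $A$ as a finite union of cosets of $A_u$. Setting $\tau := u\nu u^{-1}$ and $c := \nu\tau$, one checks that $c = u^\nu u^{-1} \in U$ with $c^\nu = c^{-1}$, so $c \in S$; and for $a \in A$ the identity $a^c = (a^\nu)^\tau = (a^\tau)^{-1}$ shows that $a \in A_u$ if and only if $a^c = a$. Hence $A_u = C_A(c)$, and it suffices to cover $A$ by finitely many cosets of $C_A(c)$. Write $C := C_U(\nu)$, a finite group by hypothesis.

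For each $a \in A$ I would apply Lemma \ref{lem xs} with the test element $x := u^{-1}a$. Using $a^\nu = a^{-1}$ to push $\nu$ past $a^{-1}$, a short computation gives $\nu x^{-1}\nu x = aca$, so the resulting square root $\ell_a$ satisfies
\[
\ell_a^{2} = a^{-1}c^{-1}a^{-1} \quad \text{and} \quad a\ell_a \in uC.
\]
Since $|uC| = |C|$ is finite, the assignment $a \mapsto a\ell_a$ partitions $A$ into at most $|C|$ nonempty fibers $F_g := \{a \in A : a\ell_a = g\}$ indexed by $g \in uC$.

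The key step is to show that each $F_g$ lies in a single coset of $A_u$. Fix $g = uc_0 \in uC$ and $a, a' \in F_g$. Writing $\ell_a = a^{-1}g$ and substituting into $\ell_a^{2} = a^{-1}c^{-1}a^{-1}$ yields $a^{-1}ga = g^{-1}c^{-1}$; the same identity for $a'$ gives $a^{-1}ga = (a')^{-1}ga'$, from which $b := a(a')^{-1}$ centralizes $g$. Thus $F_g \subseteq a'\cdot C_A(uc_0)$, and it remains to check that $C_A(uc_0) \subseteq C_A(c)$. For any $a \in C_A(uc_0)$ the identity $a(uc_0) = (uc_0)a$ gives $a^u = c_0 a c_0^{-1}$; applying the automorphism $\nu$ and using $c_0^\nu = c_0$ and $a^\nu = a^{-1}$ yields $a^{u^\nu} = c_0 a c_0^{-1} = a^u$, hence $a^c = a^{u^\nu u^{-1}} = (a^u)^{u^{-1}} = a$, so $a \in C_A(c)$.

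Combining these observations, $A$ is the union of at most $|C|$ cosets of $A_u$, so $|A : A_u| \le |C| < \infty$. The main difficulty I expect is spotting the right test element $x = u^{-1}a$ to feed into Lemma \ref{lem xs}: this is exactly what couples the varying parameter $a \in A$ with the fixed conjugator $u$ so that the Lemma \ref{lem xs} output $a\ell_a$ lands in the finite coset $uC$; once that coupling is set up, the inclusion $C_A(uc_0) \subseteq C_A(c)$ emerges cleanly from $\nu$-equivariance together with the fact that elements of $C$ are $\nu$-fixed.
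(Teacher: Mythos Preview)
Your proof is correct and follows essentially the same strategy as the paper's: feed a test element coupling $a\in A$ with $u$ into Lemma~\ref{lem xs} to obtain a map from $A$ into a finite coset of $C_U(\nu)$, then show each fiber lies in a single coset of $A_u$. Your version is marginally more streamlined---by identifying $A_u=C_A(c)$ at the outset and proving the fiber inclusion via the centralizer containment $C_A(uc_0)\subseteq C_A(c)$, you avoid the paper's detour through Neumann's covering lemma (Corollary~\ref{cor neumann}).
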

\begin{proof}
Fix $a\in A$ and  consider the element
\[
\nu \nu^{au},\quad a\in A.
\]
This element is in $U$.  Let $s\in U$ with $s^{-2}=\nu\nu^{au}$.
By Lemma \ref{lem xs} we get that
%
%
\begin{equation}\label{eq va}
v_a:=aus\in C_U(\nu).
\end{equation}
%
%
Now set
\[
\calm_a:=\{b\in A\mid v_b=v_a\}.
\]
Notice that since $|C_U(\nu)|<\infty,$
%
%
\begin{equation}\label{eq calma}
\text{the set }\{\calm_c\mid c\in A\}\text{ is finite and }A=\textstyle{\bigcup}_{c\in A}\calm_c.
\end{equation}
By equation \eqref{eq va} we get $s^{-1}=v_a^{-1}au$ and conjugating
by $\nu$ noticing that $\nu$ inverts $a$ and $s$ and centralizes $v_a$ we see that $s^{-1}=u^{-\nu}av_a$.
So we get the equality
\[
v_a^{-1}au=u^{-\nu}av_a,
\]
from which it follows that
%
%
\begin{equation}\label{eq seq}
u^{-1}\nu bv_au^{-1}=\nu v_a^{-1}b,\quad \forall b\in\calm_a.
\end{equation}
%
%
Let $c\in \calm_a$, then as in equation \eqref{eq seq} we get that
$u^{-1}\nu cv_au^{-1}=\nu v_a^{-1}c$ and this together with equation \eqref{eq seq}
yields
\[
uv_a^{-1}c^{-1}bv_au^{-1}=c^{-1}b,\quad \forall b, c\in\calm_a.
\]
Since $\nu$ inverts $c^{-1}b\in A$, it follows that $uv_a^{-1}\nu v_au^{-1}=u\nu u^{-1}$
inverts $c^{-1}b$.  We thus can conclude that
%
%
\begin{equation}\label{eq unuu-1 inverts}
u\nu u^{-1}\text{ inverts }\lan bc^{-1}\mid b, c\in\calm_a\ran,\quad\forall a\in A.
\end{equation}
%
%
By equation \eqref{eq calma} and by Corollary \ref{cor neumann}
one of the groups $\lan bc^{-1}\mid b, c\in\calm_a\ran$ has finite index
in $A$, so $|A:A_u|<\infty$ as asserted.
\end{proof}

\begin{prop}\label{prop <S>}
Let $R:=\lan S\ran$, then
 \begin{enumerate}
 \item\
 $R'$ is a periodic group;
 
 \item
 $R$ is solvable.
 \end{enumerate}
\end{prop}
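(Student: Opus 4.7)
The plan for (1) is to show that every finitely generated subgroup $H \le R$ has a finite commutator subgroup; periodicity of $R'$ will then follow, since any element of $R'$ lies in $H'$ for some such $H$. I would take $H = \lan s_1,\dots,s_n\ran$ with $s_i \in S$ (permitted since $S$ generates $R$), write each $s_i = \nu\tau_i$ for an involution $\tau_i \in \Inv(G)$ and, using conjugacy of involutions in $G$, choose $u_i \in U$ with $\tau_i = u_i \nu u_i^{-1}$. Proposition \ref{prop A:Au} then gives that each $A_{u_i}$ has finite index in $A$, so setting
\[
D := \bigcap_{i=1}^n A_{u_i},
\]
I obtain a (uniquely $2$-divisible) subgroup of finite index in $A$ which is inverted by both $\nu$ and every $\tau_i$. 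The key observation is that a product of two elements each inverting $D$ centralizes $D$, so every $s_i = \nu\tau_i$ lies in $C_U(D)$, and hence $H \le C_U(D)$.

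I would then invoke Lemma \ref{lem CUD}, by which $C_U(D)/D$ is finite (and solvable). Consequently $H/(H \cap D)$ is finite; since $H$ centralizes $D$ we have $H \cap D \le Z(H)$, so $H/Z(H)$ is finite, and Schur's theorem yields $|H'| < \infty$. Any element of $R'$ is a product of finitely many commutators in $R$ and therefore lies in $H'$ for some such $H$, and so has finite order; this proves (1). In fact the same argument shows the stronger conclusion that $R'$ is locally finite, which is the form I will actually use for (2).

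For (2), I would apply Shunkov's theorem to $R'$. The subgroup $R'$ is $\nu$-invariant, being characteristic in the $\nu$-invariant group $R$, and $\nu$ acts on the periodic group $R'$ as an almost regular involutory automorphism since $C_{R'}(\nu) \le C_U(\nu)$ is finite. Shunkov then supplies a solvable normal subgroup $N \nsg R'$ of finite index. The main obstacle is bridging the gap between ``virtually solvable'' and ``solvable''; this is where the uniquely $2$-divisible hypothesis on $U$ is decisive. Since $R' \le U$ contains no involutions, every element of the periodic group $R'$ has odd order, and therefore so does every element of the finite quotient $R'/N$. By Cauchy's theorem $|R'/N|$ is odd, so the Feit--Thompson theorem forces $R'/N$ to be solvable. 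Combined with the solvability of $N$, this yields that $R'$ is solvable, whence $R$ is solvable because $R/R'$ is abelian.
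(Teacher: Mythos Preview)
Your proof of (1) is essentially identical to the paper's: both take a finitely generated subgroup of $R$ generated by elements of $S$, intersect the corresponding $A_{u_i}$'s to obtain a finite-index $D\le A$, observe that the generators land in $C_U(D)$, and then apply Lemma~\ref{lem CUD} together with Schur's theorem to get $|H'|<\infty$. One minor remark: the sentence promising to use local finiteness of $R'$ in (2) is never actually cashed in---your argument for (2) uses only periodicity.

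For (2) you diverge from the paper. Both arguments start with Shunkov to obtain a solvable subgroup $N\le R'$ of finite index, but the paper closes the gap by observing that $R'$ is \emph{locally solvable}: indeed, the paper records in (*) not just that $H/Z(H)$ is finite but also that $H$ itself is solvable (since $C_U(D)/D$ is solvable by Lemma~\ref{lem CUD}), and a virtually solvable, locally solvable group is solvable. You instead pass to the normal core of $N$, note that the finite quotient $R'/N$ has odd order because every element of the periodic group $R'\le U$ has odd order, and invoke Feit--Thompson. Both routes are correct; the paper's is marginally more self-contained in that the solvability of $H$ already comes for free from Lemma~\ref{lem CUD}, so no separate odd-order argument on $R'/N$ is needed. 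Your route, on the other hand, does not require extracting the solvability of $H$ at all---only the finiteness of $H'$---so your version of (*) is formally weaker yet still sufficient.
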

\begin{proof}
(1):\quad
We first show that 
for elements $u_1,\dots, u_n\in U$
the subgroup $K:=\lan \nu u_1\nu u_1^{-1},\dots, \nu u_n\nu u_n^{-1}\ran$
is solvable, and $K/Z(K)$ is finite.  By Remark \ref{rem basic}(3),
this will show that
\smallskip

\begin{itemize}
\item[(*)]
if $H$ is a f.g.~subgroup of $R,$ then $H$ is solvable,\\
and $H/Z(H)$ is finite.
\end{itemize}
\smallskip

Let $D:=\bigcap_{i=1}^n A_{u_i}$.  By the definition
of $A_{u_i}$ and by Proposition \ref{prop A:Au}, $|A:D|<\infty$ and $D$ is inverted
by $\nu, u_1\nu u_1^{-1},\dots, u_n\nu u_n^{-1}$.
Also, by Remark \ref{rem basic}(2), $D$ is uniquely $2$-divisible.
By Lemma \ref{lem CUD}, $C_U(D)/D$ is finite and solvable, so since
$K\le C_U(D)$, we see that $K/Z(K)$ is finite and solvable.
Hence (*) holds.

Next let $g\in R'$.  Then there exists a finitely generated subgroup
$H$ of $R$ such that $g\in H'$.  By (*) and by \cite[(33.9), p.~168]{A},
$H'$ is finite, so the order of $g$ is finite.
This completes the proof of part (1).
\medskip

\noindent
(2):\quad
By (1), $R'$ is a periodic group and since $R$ is $\nu$-invariant,
$\nu$ is an almost regular automorphism of $R'$.  By the main result
of Shunkov in \cite{Sh}, $R'$ is virtually solvable.  But by (*),
$R'$ is also locally solvable, so this shows that $R'$ is solvable and
hence so is $R$.
\end{proof}

\medskip

\noindent
\begin{proof}[Proof of Theorem \ref{thm main}.]
By Proposition \ref{prop <S>}, $\lan S\ran$
is solvable.
By \cite[(3.4), p.~281]{K} (see also
\cite[Lemma 2.1(1) and Lemma 2.2(1)]{S}), $U=\lan S\ran C_U(\nu)$
and $\lan S\ran\nsg U$.
Since $C_U(\nu)$ is a finite uniquely $2$-divisible group,
so it has odd order.  By the Feit-Thompson theorem it is solvable.
Hence $U$ is solvable.
\end{proof}

\subsection*{Acknowledgment.}  I would like to
thank Pavel Shumyatsky for several fruitful
email exchanges.


\end{document}